\newtheorem{thm}{Theorem}
\newtheorem{thms}{Theorem}[section]
\newtheorem{lem}{Lemma}[section]
\newtheorem{prop}{Proposition}[section]
\newtheorem{cor}{Corollary}[section]
\numberwithin{equation}{section}
\newtheorem{defi}{Definition}
\theoremstyle{definition}
\theoremstyle{remark}
\newcommand{\C}{{\mathbb C}}
\newcommand{\R}{{\mathbb R}}
\renewcommand{\Re}{\mathrm{Re}}
\renewcommand{\Im}{\mathrm{Im}}
\def\11{{\rm 1~\hspace{-1.4ex}l} }
\newcommand{\Arg}{\mathrm{Arg}}
\newcommand{\Ker}{\;\mathrm{Ker}}
\newcommand{\vertiii}[1]{{\left\vert\kern-0.25ex\left\vert\kern-0.25ex\left\vert #1
    \right\vert\kern-0.25ex\right\vert\kern-0.25ex\right\vert}}
\begin{document}
\title[Orbital stability for the generalized Choquard model]{Orbital stability of solitary waves for the generalized Choquard model }

\author{Vladimir Georgiev}
\address{V. Georgiev,  Dipartimento di Matematica, Universit\`a di Pisa
Largo B. Pontecorvo 5, 56127 Pisa, Italy \\
and \\
 Faculty of Science and Engineering \\ Waseda University \\
 3-4-1, Okubo, Shinjuku-ku, Tokyo 169-8555 \\
Japan, and IMI--BAS, Acad. Georgi Bonchev Str., Block 8, 1113 Sofia, Bulgaria}
\email{georgiev@dm.unipi.it}

\author{Mirko Tarulli}
\address{M. Tarulli, Faculty of Applied Mathematics and Informatics, Technical University of Sofia, Kliment Ohridski Blvd. 8, 1000 Sofia and IMI--BAS, Acad. Georgi Bonchev Str., Block 8, 1113 Sofia, Bulgaria\\
Dipartimento di Matematica, Universit\`a di Pisa
Largo Bruno Pontecorvo 5 I - 56127 Pisa. Italy.}
\email{mta@tu-sofia.bg}

\author{George Venkov}
\address{G. Venkov, Faculty of Applied Mathematics and Informatics, Technical University of Sofia, Kliment Ohridski Blvd. 8, 1000 Sofia, Bulgaria}
\email{gvenkov@tu-sofia.bg}%

\thanks{ The first  author   was supported in part by  INDAM, GNAMPA - Gruppo Nazionale per l'Analisi Matematica, la Probabilita e le loro Applicazioni, by Institute of Mathematics and Informatics, Bulgarian Academy of Sciences, by Top Global University Project, Waseda University,  the Project PRA 2018 49 of  University of Pisa and project "Dinamica di equazioni nonlineari dispersive", "Fondazione di Sardegna" , 2016}

\begin{abstract}
We consider the generalized Choquard equation describing trapped electron gas in 3 dimensional case.
The study of orbital stability of the energy minimizers (known as ground states) depends essentially in the local uniqueness of these minimizers.
In equivalent way one can optimize the Gagliardo--Nirenberg inequality subject to the constraint fixing the $L^2$ norm. The uniqueness of the minimizers for the case $p=2$, i.e. for the case of Hartree--Choquard is well known. The main difficulty for the case $p > 2$ is connected with possible lack of control on the $L^p$ norm of the minimizers.
\end{abstract}

\renewcommand{\thefootnote}{\fnsymbol{footnote}}
\footnotetext{\emph{Key words:} generalized Choquard equation, local uniqueness, ground states}
\footnotetext{\emph{AMS Subject Classifications:} 37K40, 35Q55, 35Q51}

\maketitle

\section{Main results}
The active study of the existence and qualitative behaviour of standing waves is motivated by the important question of stability/instability properties of these waves. Therefore, one has to justify the $H^1$-evolution dynamics of the corresponding Cauchy problem
 \begin{eqnarray}\label{eq.Hart}
 & & i \partial_t u+\Delta u + I(|u|^p) |u|^{p-2} u=0, \quad (t,x)\in
    \R_{+}\times\R^3,
 \\ \nonumber
& & u(0,x)=u_0(x)
\end{eqnarray}
and then to approach  orbital stability/instability problem. Here and below  $I(f)$ is the  Riesz potential defined by
\begin{equation} \label{eq.Dsfg2}
I(f)(x) =(-\Delta)^{-1}f(x)= \frac{1}{4\pi}\int_{\R^3} \frac{f(y)dy}{|x-y|^{}}. \quad
\end{equation}

In general the existence of ground state is studied in \cite{GV}, \cite{MVS1}, \cite{MVS2} and decay and scattering properties in  \cite{TV}. A detailed classification result for  linearized stability properties of the standing waves is obtained in \cite{GS}. Considering linearization of \eqref{eq.Hart} around standing waves, one can apply the classification results from \cite{GS}  and deduce that linearized orbital stability holds for $ p \in  ({5}/{3} , {7}/{3}),$ while linearized orbital instability is fulfilled for $p \in [{7}/{3},5)$.
The notion of orbital stability and the verification that the nonlinear evolution based on \eqref{eq.Hart} is well-defined and gives orbitally stable dynamics for $p \in ({5}/{3} ,  {7}/{3}),$ depend essentially on the local uniqueness of standing waves. More precisely, the standing waves are related to
 the minimization problem
\begin{equation}\label{eq.tdm1}
    \mathcal{E}_\sigma = \inf_{u \in H^1 ,\ \|u\|_{L^2}^2 = \sigma} E_p(u).
\end{equation}
 Here and below
 \begin{equation}\label{eq.3}
    E_{p}(u)=  \frac{1}{2} \|\nabla u\|^2_{L^2}   - \frac{1}{2p} D(|u|^p,|u|^p),
\end{equation}
where
\begin{equation}\label{eq.da1}
D(|u|^p,|u|^p) = \langle I(|u|^p), |u|^p \rangle_{L^2} = \left\| (-\Delta)^{-1/2} |u|^p \right\|_{L^2}^2.
\end{equation}

Any minimizer of \eqref{eq.3} satisfies the Pohozaev identity
$$  \frac{\|\nabla u\|^2}{3p-5}= \frac{D(|u|^p,|u|^p)}{2p}$$
and it is  a solution to the  Euler--Lagrange equation
\begin{equation}\label{Ph1}
   -\Delta u + \omega u = I(|u|^p)|u|^{p-2} u,
\end{equation}
where $\omega>0$ is the Lagrange multiplier.
Then we can write the following  Pohozaev normalization conditions
 \begin{equation}\label{eq.poh2m}
   \frac{\omega\| u\|^2}{\beta} = \frac{\|\nabla u\|^2}{\gamma}= \frac{D(|u|^p,|u|^p)}{p} = k_\mathcal{E},
\end{equation}
  where
  \begin{equation}\label{eq.bg1}
    \beta = \frac{5-p}{2}, \ \gamma = \frac{3p-5}{2} = p-\beta.
  \end{equation}

We start with the following simple property.

\begin{lem} \label{l.11}
Assume
 $p \in (5/3 , 7/3 )$ and $u$ is a minimizer of \eqref{eq.tdm1}.  Then we have the following conditions:
 \begin{itemize}
   \item $u$ satisfies the Euler--Lagrange equation  \eqref{Ph1} with
   \begin{equation}\label{om1}
   \omega = \frac{2\beta}{\gamma - 1}  \ \frac{\mathcal{E}_\sigma}{\sigma};
 \end{equation}
    \item we have the Pohozaev normalization conditions \eqref{eq.poh2m} with

      \begin{equation}\label{om1A}
     k_\mathcal{E} = \frac{2 \mathcal{E}_\sigma}{\gamma -1}.
    \end{equation}
 \end{itemize}
 \end{lem}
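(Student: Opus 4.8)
The plan is to treat the Euler--Lagrange equation \eqref{Ph1} and the Pohozaev identity as already established (they hold for any minimizer by the discussion preceding the statement), so that the only remaining work is purely algebraic: to pin down the two constants $\omega$ and $k_\mathcal{E}$ by combining three scalar identities, namely the Nehari (virial) relation obtained by pairing \eqref{Ph1} with $u$, the Pohozaev identity, and the value of the energy at the minimizer.

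First I would test the Euler--Lagrange equation \eqref{Ph1} against $u$ and integrate. Since for $p \in (5/3,7/3)$ the Hardy--Littlewood--Sobolev inequality guarantees that $D(|u|^p,|u|^p)$ is finite for every $u \in H^1$, this pairing is justified and yields the identity
\[
\|\nabla u\|^2 + \omega \|u\|^2 = D(|u|^p,|u|^p).
\]
On the other hand, writing the Pohozaev identity with the notation \eqref{eq.bg1} (so that $3p-5 = 2\gamma$) gives $\|\nabla u\|^2/\gamma = D(|u|^p,|u|^p)/p$. Denoting this common value by $k_\mathcal{E}$, I substitute $D(|u|^p,|u|^p) = p\,k_\mathcal{E}$ and $\|\nabla u\|^2 = \gamma\,k_\mathcal{E}$ into the Nehari relation to obtain $\omega\|u\|^2 = (p-\gamma)k_\mathcal{E} = \beta\,k_\mathcal{E}$, using $\beta = p-\gamma$. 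This is precisely the remaining equality in the Pohozaev normalization \eqref{eq.poh2m}, establishing that all three ratios coincide with $k_\mathcal{E}$.

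It remains to express $k_\mathcal{E}$, and hence $\omega$, through $\mathcal{E}_\sigma$ and $\sigma$. Here I would substitute $\|\nabla u\|^2 = \gamma\,k_\mathcal{E}$ and $D(|u|^p,|u|^p) = p\,k_\mathcal{E}$ into the energy \eqref{eq.3} and use $E_p(u) = \mathcal{E}_\sigma$, which collapses to $\mathcal{E}_\sigma = \tfrac{1}{2}\gamma\,k_\mathcal{E} - \tfrac{1}{2}k_\mathcal{E} = \tfrac{\gamma-1}{2}k_\mathcal{E}$, giving \eqref{om1A}. Finally, from $\omega = \beta\,k_\mathcal{E}/\|u\|^2 = \beta\,k_\mathcal{E}/\sigma$ together with this value of $k_\mathcal{E}$ one recovers \eqref{om1}.

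The computation carries no serious obstacle; the only points requiring care are the integrability justifying the pairing against $u$ (covered by the subcritical range of $p$) and the fact that $\gamma - 1 \neq 0$ on $(5/3,7/3)$, where in fact $\gamma < 1$, so that the division by $\gamma-1$ is legitimate and the sign conventions (with $\mathcal{E}_\sigma < 0$ forcing $\omega > 0$ and $k_\mathcal{E} > 0$) are internally consistent.
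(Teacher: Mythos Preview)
Your proof is correct and follows essentially the same route as the paper: both arguments combine the Nehari relation obtained by pairing \eqref{Ph1} with $u$, the Pohozaev scaling identity, and the energy value $E_p(u)=\mathcal{E}_\sigma$, and then solve the resulting three-equation linear system for $\|\nabla u\|^2$, $D(|u|^p,|u|^p)$, and $\omega$. The only cosmetic difference is that the paper first expresses everything in terms of $\sigma\omega/\beta$ and then reads off $k_\mathcal{E}$, whereas you introduce $k_\mathcal{E}$ at the outset as the common Pohozaev ratio; the algebra is otherwise identical.
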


We introduce the space
$$
H^1_{rad}=\{u\in H^1(\R^3); u(x)=u(|x|)\}
$$
and state our main result, which treats the local uniqueness of minimizers $Q$ of \eqref{eq.tdm1}.

\begin{thm} \label{Th2} Assume  $2 \leq p < 7/3.$ Then  one can find $\varepsilon > 0$ so that for any two radial positive minimizers $Q_1, Q_2 \in H^1_{rad}$   of \eqref{eq.tdm1}, satisfying
$$ \|Q_1 - Q_2 \|_{H^1_{rad}} \leq \varepsilon ,$$
we have $Q_1=Q_2.$
\end{thm}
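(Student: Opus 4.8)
The plan is to reduce the statement to a coercivity estimate for the operator obtained by linearizing the Euler--Lagrange equation \eqref{Ph1} at a minimizer, and then to absorb the nonlinear error using the smallness of $\|Q_1-Q_2\|_{H^1_{rad}}$. First I would record the rigidity coming from the fact that $Q_1$ and $Q_2$ solve the \emph{same} minimization problem \eqref{eq.tdm1}: they share the value $\mathcal{E}_\sigma$ and the constraint $\|Q_j\|_{L^2}^2=\sigma$, so by Lemma \ref{l.11} both satisfy \eqref{Ph1} with one and the same Lagrange multiplier $\omega$ given by \eqref{om1}, and the Pohozaev normalization \eqref{eq.poh2m} forces $\|\nabla Q_1\|_{L^2}=\|\nabla Q_2\|_{L^2}$ together with $D(|Q_1|^p,|Q_1|^p)=D(|Q_2|^p,|Q_2|^p)$. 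In particular $\|Q_1\|_{H^1_{rad}}=\|Q_2\|_{H^1_{rad}}$, and since $Q_1,Q_2>0$ we may drop the absolute values and write the nonlinearity as $\mathcal{N}(Q)=I(Q^p)Q^{p-1}$.

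Setting $w=Q_1-Q_2$ and subtracting the two copies of \eqref{Ph1}, I would write
\begin{equation*}
-\Delta w+\omega w=\mathcal{N}(Q_2+w)-\mathcal{N}(Q_2)=L w+R(w),
\end{equation*}
where $Lw=p\,I(Q_2^{p-1}w)Q_2^{p-1}+(p-1)I(Q_2^p)Q_2^{p-2}w$ is the G\^ateaux derivative of $\mathcal{N}$ at $Q_2$ and $R(w)$ is the superlinear remainder. Pairing with $w$ yields the identity $\langle \mathcal{L}_+w,w\rangle=\langle R(w),w\rangle$, with $\mathcal{L}_+=-\Delta+\omega-L$ the linearized operator at $Q_2$ and
\begin{equation*}
\langle \mathcal{L}_+w,w\rangle=\|\nabla w\|_{L^2}^2+\omega\|w\|_{L^2}^2-p\,D(Q_2^{p-1}w,Q_2^{p-1}w)-(p-1)\int_{\R^3}I(Q_2^p)\,Q_2^{p-2}\,w^2\,dx.
\end{equation*}

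The heart of the argument is the coercivity of $\mathcal{L}_+$ on the constrained radial subspace. Since $Q_2$ minimizes \eqref{eq.tdm1}, the second variation of $E_p$ along the constraint is nonnegative, i.e.\ $\langle\mathcal{L}_+h,h\rangle\ge0$ for every radial $h\perp Q_2$; the potential terms in $\mathcal{L}_+$ are relatively compact perturbations of $-\Delta+\omega$, so the essential spectrum starts at $\omega>0$, and the translation zero-modes $\partial_{x_i}Q_2$ are excluded because they are not radial. Invoking the spectral/nondegeneracy information for $p\in(5/3,7/3)$ furnished by the linearized stability classification of \cite{GS}, I would upgrade this to a strict bound $\langle\mathcal{L}_+h,h\rangle\ge c\|h\|_{H^1_{rad}}^2$ for all radial $h\perp Q_2$. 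The common $L^2$--constraint supplies the needed near-orthogonality: from $\|Q_1\|_{L^2}^2=\|Q_2\|_{L^2}^2$ one gets $\langle Q_2,w\rangle=-\tfrac12\|w\|_{L^2}^2$, so decomposing $w=w^{\perp}+\mu Q_2$ with $\mu=O(\|w\|_{H^1_{rad}}^2)$ gives $\langle\mathcal{L}_+w,w\rangle\ge c\|w\|_{H^1_{rad}}^2-C\|w\|_{H^1_{rad}}^3$.

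The main obstacle is the remainder estimate, and it is precisely the $L^p$ difficulty highlighted in the abstract: for $2\le p<7/3<3$ the map $t\mapsto t^{p-1}$ is of class $C^1$ but \emph{not} $C^2$, so $R(w)$ cannot be bounded by $\|w\|^2$ in a naive pointwise fashion and must instead be controlled in a H\"older sense, with modulus of order $\alpha=p-2>0$. Here I would combine the Hardy--Littlewood--Sobolev inequality with the radial Sobolev embedding $H^1_{rad}(\R^3)\hookrightarrow L^{6p/5}$ — using the uniform bounds on $\|Q_j\|_{H^1_{rad}}$ from the first step to control the $L^p$-type norms entering the nonlocal terms — to obtain $|\langle R(w),w\rangle|\le C\|w\|_{H^1_{rad}}^{2+\delta}$ for some $\delta>0$. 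Feeding this into $\langle\mathcal{L}_+w,w\rangle=\langle R(w),w\rangle$ and using the coercivity yields $c\|w\|_{H^1_{rad}}^2\le C\|w\|_{H^1_{rad}}^{2+\delta}$, that is $c\le C\|w\|_{H^1_{rad}}^{\delta}$, which is impossible once $\|Q_1-Q_2\|_{H^1_{rad}}\le\varepsilon$ with $\varepsilon$ small enough, unless $w=0$. This forces $Q_1=Q_2$. The genuinely delicate point throughout is to secure uniform control of the $L^p$ (equivalently $L^{6p/5}$) norms of the minimizers, so that both the coercivity constant $c$ and the remainder exponent $\delta$ are independent of the particular pair $Q_1,Q_2$.
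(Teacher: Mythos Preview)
Your scheme is the standard ``coercivity of the linearized operator plus superlinear remainder'' argument, and it is reasonable in outline, but it has a genuine gap precisely at the step the paper is built to circumvent. You claim that, invoking \cite{GS}, the nonnegativity of the second variation on the constraint can be upgraded to strict coercivity $\langle \mathcal{L}_+ h,h\rangle\ge c\|h\|_{H^1}^2$ for radial $h\perp Q_2$. That upgrade requires $\Ker L_+\cap H^1_{rad}=\{0\}$: ruling out the translation modes $\partial_{x_j}Q_2$ by parity is not enough, since the nonlocal operator could have \emph{radial} zero modes. The reference \cite{GS} classifies linearized orbital stability, a statement about the matrix Hamiltonian built from $L_\pm$, and does not deliver the radial nondegeneracy of $L_+$. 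The paper says so explicitly (``the lack of Sturm comparison argument for nonlocal ODE causes essential difficulties to show the non-degeneracy of $L_+$'') and only proves $\dim\Ker L_+\le 2$ (Lemma~\ref{l.22}). Without nondegeneracy your constant $c$ may vanish, and then $c\le C\|w\|^\delta$ is vacuous.

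The paper's proof is designed to tolerate a nontrivial radial kernel. Arguing by contradiction, failure of local uniqueness produces some $h\in\Ker L_+\cap H^1_{rad}$, $h\perp Q$, along which all derivatives of $K(\varepsilon)=E_p\bigl(\sqrt{\sigma}(Q+\varepsilon h)/\|Q+\varepsilon h\|_{L^2}\bigr)$ vanish at $\varepsilon=0$. Using the pointwise bound $|h(r)|\lesssim Q(r)$ from Proposition~\ref{pr.s1}, $K$ extends analytically to a complex domain $\Omega_\delta$ reaching infinity along $z(R)=R+iR$; the identity $K\equiv K(0)$ on $\Omega_\delta$ then forces $E_p(\sqrt{\sigma}\,h)=\mathcal{E}_\sigma$, so $\sqrt{\sigma}\,h$ is itself a minimizer. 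But $h\perp Q$ implies $h(r_0)=0$ for some $r_0$, and Lemma~\ref{sl1} gives $h\equiv 0$, a contradiction. This analytic-continuation device substitutes for the nondegeneracy you are assuming. If you could prove $\Ker L_+\cap H^1_{rad}=\{0\}$ independently, your argument would go through and would be more direct; but that is exactly the hard open point, not something available from \cite{GS}.
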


The classical case $p=2$  has been studied in \cite{Lieb}, the approach is based on shooting method and the fact that the Riesz potential behaves like
\begin{equation}\label{eq.tr1}
   I(|u|^2)(x) = \frac{\|u\|^2_{L^2}}{4\pi |x|} + o\left(|x|^{-1} \right), \ \ x \to \infty
\end{equation}
so that Pohozaev normalization conditions \eqref{eq.poh2m} in this case become
$$
   \frac{\omega\| u\|^2}{3} = \|\nabla u\|^2 = \frac{D(|u|^2,|u|^2)}{4}.
$$
Indeed, taking any two solutions $u_1,u_2$, we use the previous normalization conditions and from \eqref{eq.tr1} we deduce
$$ I(|u_1|^2)(x) - I(|u_2|^2)(x) = o\left(|x|^{-1} \right), \ \ x \to \infty$$
and this gives the possibility to apply Sturm argument and follow shooting method to deduce  uniqueness.
If $p\neq 2, $
then \eqref{eq.tr1} becomes
$$
 I(|u|^p)(x) = \frac{1}{4\pi} \frac{\|u\|_{L^p}^p}{ |x|} + o\left(|x|^{-1} \right), \ \ x \to \infty $$
 and obviously we loose  the control on the asymptotics of Riesz potential at infinity, since in this case the $L^p$ norm is not presented in Pohozaev normalization conditions \eqref{eq.poh2m}.

There are different method to prove the uniqueness of positive radial minimizes of nonlinear elliptic equations with local type nonlinearities. The method of McLeod and Serin \cite{MS99} and the subsequent refinements due to Kwong \cite{K89} are also based on Sturm oscillation argument and therefore they work effectively for local type nonlinearities. In our case the nonlinearities involve the nonlocal Riesz potential and consequently we have met essential difficulties to follow this strategy.

 Alternative method to show uniqueness of minimizer for Weinstein functionals have been proposed in \cite{CGN07} for the case of local type nonlinearity by studying
 $$\frac{\|u\|_{L^2}^{5-p}\|\nabla u\|_{L^2}^{3p-5}}{D(|u|^p,|u|^p)}.$$
 Performing the substitution of $u$ by $Q+\varepsilon h$ and making a Taylor expansion of the above quotient near $\varepsilon =0$, one can reduce the local existence result to the proof that the operator
 \begin{equation*}
 L_{+}=-\Delta+\omega-pI( Q^{p-1}\cdot)Q^{p-1}-(p-1)I(  Q^{p})Q^{p-2},
\end{equation*}
has a unique negative eigenvalue and a kernel of dimension not greater than $2$.
However, the lack of Sturm comparison argument for nonlocal ODE causes essential difficulties to show the non-degeneracy of $L_+,$ i.e. to check that the kernel of $L_+$ on $H^1_{rad}$ is trivial.
Our approach to obtain the local uniqueness of the minimizer might allow degeneracy of $L_+,$ but the local uniqueness is based on the appropriate analytic continuation $K(z)$ of the function
$$ K : \varepsilon \to E_p\left(\sqrt{\sigma} \frac{Q+ \varepsilon h}{\|Q+ \varepsilon h\|_{L^2}}\right),$$
where $h \in H^1_{rad}$ is a nontrivial element in the kernel of $L_+.$
The crucial point is to show the identity $K(z) = K(0)$ for $z $ in the domain of analyticity of $K(z)$ and to find a suitable curve $z=z(R),$ $R>0$ in this domain so that
$$ \lim_{R \to \infty} K(z(R)) = E_p( \sqrt{\sigma}  \ h).$$

Another question we shall treat in this work is  the characterization of the optimal  constant $C_*$ in the Gagliardo--Nirenberg  inequality
\begin{equation}\label{eq.GNq1}
    D(|u|^p,|u|^p)  \leq C_* \|u\|_{L^2}^{5-p}\|\nabla u\|_{L^2}^{3p-5}.
\end{equation}
Choosing $C_*>0$ to be the best constant in this inequality, we consider the minimization problem
\begin{alignat}{1}
 \label{eq.tdm1a}
 & \mathcal{F}_{\sigma} = \inf_{u \in  H^1, \|u\|^2_{L^2}=\sigma} F_p(u),
\end{alignat}
where
  \begin{equation}\label{eq.GNq2}
   F_p(u) = \|u\|_{L^2}^{5-p}\|\nabla u\|_{L^2}^{3p-5}  - \frac{1}{C_*} D(|u|^p,|u|^p) .
\end{equation}

We focus our interest to
show (at least for $5/3 < p < 7/3$) that the minimizers of \eqref{eq.tdm1} are minimizers of \eqref{eq.tdm1a}.
To give an answer to this question we start with some properties of the minimizers of \eqref{eq.tdm1a}. More precisely, we have the following result.

 \begin{lem} \label{l.fm1}
 Assume $\sigma >0$ and $\omega >0,$ defined by
 \begin{equation}\label{eq.rr1}
  \omega^{1-\gamma} = \frac{C_*}{p} \frac{\gamma^\gamma}{\beta^{\gamma-1}} \sigma^{p-1}.
\end{equation}
If $u$ is a minimizer of \eqref{eq.tdm1a}, then the following conditions are equivalent:
 \begin{description}
   \item[i)]
   \begin{equation}\label{eq.pp1}
    \frac{\|\nabla u\|^2}{\gamma} =  \frac{D(|u|^p,|u|^p)}{p};
 \end{equation}
   \item[ii)]
    \begin{equation}\label{eq.pp2}
    \frac{\omega \sigma}{\beta} =  \frac{D(|u|^p,|u|^p)}{p};
 \end{equation}
   \item[iii)] $u$ is a solution to the Euler--Lagrange equation \eqref{Ph1}.
 \end{description}
 \end{lem}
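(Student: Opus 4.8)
The plan is to exploit the fact that $C_*$ is the \emph{sharp} constant in \eqref{eq.GNq1}, which forces any minimizer to be a Gagliardo--Nirenberg optimizer, and then to read off all three conditions from the Euler--Lagrange equation of \eqref{eq.tdm1a}. First I would observe that $F_p\ge 0$ on the constraint, and that the scaling $\phi\mapsto \lambda^{3/2}\phi(\lambda\,\cdot)$ preserves $\|\phi\|_{L^2}^2=\sigma$ while multiplying $F_p$ by $\lambda^{3p-5}\to 0$ as $\lambda\to 0^+$; hence $\mathcal{F}_\sigma=0$ and every minimizer $u$ of \eqref{eq.tdm1a} realizes equality in \eqref{eq.GNq1}, i.e.
\begin{equation}\label{eq.plan.star}
 D(|u|^p,|u|^p)=C_*\,\sigma^{\beta}\bigl(\|\nabla u\|_{L^2}^{2}\bigr)^{\gamma}.
\end{equation}
I would record \eqref{eq.plan.star} first, since it is the algebraic identity tying the three conditions together; throughout I take $u$ real and positive.

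The next step is to write the Euler--Lagrange equation for \eqref{eq.tdm1a}. Differentiating $F_p(u)=\bigl(\|u\|_{L^2}^2\bigr)^{\beta}\bigl(\|\nabla u\|_{L^2}^2\bigr)^{\gamma}-\tfrac1{C_*}D(|u|^p,|u|^p)$ under the constraint $\|u\|_{L^2}^2=\sigma$ yields, with a Lagrange multiplier $\Lambda$,
\begin{equation}\label{eq.plan.EL0}
 -\gamma\,\sigma^{\beta}\|\nabla u\|_{L^2}^{2(\gamma-1)}\Delta u+\bigl(\beta\,\sigma^{\beta-1}\|\nabla u\|_{L^2}^{2\gamma}-\Lambda\bigr)u=\tfrac{p}{C_*}I(|u|^p)|u|^{p-2}u.
\end{equation}
Pairing \eqref{eq.plan.EL0} with $u$, using $\langle I(|u|^p)|u|^{p-2}u,u\rangle=D(|u|^p,|u|^p)$, $\langle-\Delta u,u\rangle=\|\nabla u\|_{L^2}^2$ and $\beta+\gamma=p$, and then inserting \eqref{eq.plan.star}, forces $\Lambda=0$. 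Thus every minimizer solves \eqref{eq.plan.EL0} with $\Lambda=0$.

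For the equivalence of \textbf{i)} and \textbf{iii)} I would match coefficients. Substituting \eqref{eq.plan.star} into \eqref{eq.pp1} shows that \eqref{eq.pp1} is equivalent to $\gamma\,\sigma^{\beta}\|\nabla u\|_{L^2}^{2(\gamma-1)}=p/C_*$, i.e. to the coefficient of $\Delta u$ in \eqref{eq.plan.EL0} being exactly $p/C_*$. When this holds, dividing \eqref{eq.plan.EL0} (with $\Lambda=0$) by $p/C_*$ turns it into \eqref{Ph1} with $\omega=C_*\beta\,\sigma^{\beta-1}\|\nabla u\|_{L^2}^{2\gamma}/p$, and a short computation using \eqref{eq.bg1} and $\gamma+\beta=p$ confirms that this $\omega$ is precisely the value fixed by \eqref{eq.rr1}. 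Conversely, if $u$ solves \eqref{Ph1}, I would eliminate the nonlocal term between \eqref{Ph1} and \eqref{eq.plan.EL0}; since $u$ and $\Delta u$ are linearly independent for a nontrivial positive solution, the coefficients of $u$ and of $\Delta u$ must balance separately, which returns the identity $\gamma\,\sigma^{\beta}\|\nabla u\|_{L^2}^{2(\gamma-1)}=p/C_*$ and hence \eqref{eq.pp1}.

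Finally, the equivalence of \textbf{iii)} and \textbf{ii)} I would obtain from the Pohozaev normalization: a solution of \eqref{Ph1} automatically satisfies \eqref{eq.poh2m}, one entry of which is $\omega\sigma/\beta=D(|u|^p,|u|^p)/p$, i.e. \eqref{eq.pp2}; for the converse, \eqref{eq.plan.star} turns \eqref{eq.pp2} into an equation that determines $\|\nabla u\|_{L^2}^2$ uniquely (here $\gamma>0$), and one checks through \eqref{eq.rr1} that the resulting value is the same as the one singled out by \eqref{eq.pp1}, closing the loop. The only genuinely delicate points I anticipate are (a) verifying that the multiplier $\omega$ produced by \eqref{eq.plan.EL0} agrees on the nose with the prescribed value \eqref{eq.rr1} --- exact but unforgiving exponent bookkeeping via \eqref{eq.bg1} --- and (b) the linear independence of $u$ and $\Delta u$ needed for the implication \textbf{iii)}$\Rightarrow$\textbf{i)}.
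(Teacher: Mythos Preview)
Your proposal is correct and rests on the same two pillars as the paper: (a) any minimizer of \eqref{eq.tdm1a} attains equality in the Gagliardo--Nirenberg inequality, and (b) the defining relation \eqref{eq.rr1} links $\omega$, $\sigma$ and $\|\nabla u\|_{L^2}^2$ once the Gagliardo--Nirenberg equality is in hand. The algebraic bookkeeping you outline via \eqref{eq.bg1} and $\beta+\gamma=p$ is exactly what the paper does.

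There is one genuine methodological difference worth recording. You compute the \emph{raw} Euler--Lagrange equation for $F_p$ under the constraint, obtain \eqref{eq.plan.EL0}, and then show $\Lambda=0$ by testing against $u$; for the implication \textbf{iii)}$\Rightarrow$\textbf{i)} you compare \eqref{Ph1} and \eqref{eq.plan.EL0} and invoke linear independence of $u$ and $\Delta u$ to force the coefficients to match. The paper instead asserts directly that the Euler--Lagrange equation takes the form $-\Delta u+\Lambda u=I(|u|^p)|u|^{p-2}u$ (implicitly dividing your \eqref{eq.plan.EL0} by the coefficient of $-\Delta u$ and absorbing the nonlocal coefficient), and then argues entirely through the Pohozaev normalization \eqref{eq.poh2m}: \textbf{iii)} is identified with the full triple equality \eqref{eq.poh2m}, while \textbf{i)} and \textbf{ii)} are shown to each recover \eqref{eq.poh2m} by combining the Gagliardo--Nirenberg equality with \eqref{eq.rr1}. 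Your route is more transparent about where the coefficients in the Euler--Lagrange equation come from and why the Lagrange multiplier for the $L^2$ constraint vanishes; the paper's route is shorter and avoids the linear-independence step by leaning on Pohozaev throughout. Both are valid, and your flagged point (b) about linear independence is harmless: a nontrivial positive $H^1$ function cannot satisfy $\Delta u=cu$.
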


\begin{defi}
We shall say that the pair $(\sigma,\omega)$ is admissible for  the problem \eqref{eq.tdm1} if the relation  \eqref{om1} is fulfilled.

Similarly, we shall say that the pair $(\sigma,\omega)$ is admissible for  \eqref{eq.tdm1a} if \eqref{eq.rr1} holds.
\end{defi}

Now we are ready to give an answer to the question about the link between the two minimizers.
\begin{thm} \label{l.eq1}   Assume  $p \in (5/3, 7/3).$ Then the following conditions are equivalent:
\begin{description}
  \item[a)] $(\sigma, \omega)$ is admissible pair for  \eqref{eq.tdm1} and $u$ is a minimizer of \eqref{eq.tdm1};
  \item[b)] $(\sigma, \omega)$ is admissible pair for  \eqref{eq.tdm1a} and $u$ is a minimizer of \eqref{eq.tdm1a}.
\end{description}
\end{thm}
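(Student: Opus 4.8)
The plan is to use the sharp Gagliardo--Nirenberg inequality \eqref{eq.GNq1} as the bridge between the two functionals and to observe that, once the mass is fixed to $\|u\|^2=\sigma$, both problems are governed by the single scalar $t=\|\nabla u\|^2$. Throughout I write $\beta=(5-p)/2$, $\gamma=(3p-5)/2$, so that $\beta+\gamma=p$ and $\gamma\in(0,1)$ for $p\in(5/3,7/3)$. Since $C_*$ is the best constant, $F_p\ge 0$ with equality exactly on the optimizers of \eqref{eq.GNq1}; hence $\mathcal F_\sigma=0$ and the minimizers of \eqref{eq.tdm1a} are precisely the Gagliardo--Nirenberg optimizers of mass $\sigma$. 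On the other side, for $\|u\|^2=\sigma$ the same inequality yields
\begin{equation*}
E_p(u)=\tfrac12 t-\tfrac{1}{2p}D(|u|^p,|u|^p)\ \ge\ \tfrac12 t-\tfrac{C_*}{2p}\sigma^{\beta}t^{\gamma},
\end{equation*}
with equality iff $u$ is an optimizer; call the right-hand side $\Phi_\sigma(t)$. As $\gamma\in(0,1)$, $\Phi_\sigma$ is strictly convex on $(0,\infty)$ with a unique minimizer $t_*$ solving $t_*^{\,1-\gamma}=\tfrac{C_*\gamma}{p}\sigma^{\beta}$, and the two-parameter scaling $u\mapsto \mu\,u(\lambda\,\cdot)$ (under which the quotient in \eqref{eq.GNq1} is invariant while $\|u\|^2,\|\nabla u\|^2$ range over all positive pairs) produces an optimizer with $\|u\|^2=\sigma$, $\|\nabla u\|^2=t_*$. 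Thus $\mathcal E_\sigma=\Phi_\sigma(t_*)$, and the minimizers of \eqref{eq.tdm1} are exactly the optimizers of \eqref{eq.GNq1} with mass $\sigma$ and gradient energy $t_*$.

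For a)$\Rightarrow$b) I would argue that if $u$ minimizes \eqref{eq.tdm1}, then $\mathcal E_\sigma=E_p(u)\ge\Phi_\sigma(t)\ge\Phi_\sigma(t_*)=\mathcal E_\sigma$ forces both inequalities to be equalities, so $u$ is a Gagliardo--Nirenberg optimizer (hence a minimizer of \eqref{eq.tdm1a}) with $\|\nabla u\|^2=t_*$. To match frequencies, Lemma \ref{l.11} gives \eqref{Ph1} and the normalization \eqref{eq.poh2m}, whence $\omega\sigma/\beta=\|\nabla u\|^2/\gamma=D(|u|^p,|u|^p)/p$; together with $D(|u|^p,|u|^p)=C_*\sigma^{\beta}t_*^{\gamma}$ this gives $\omega=\beta t_*/(\gamma\sigma)$, and raising to the power $1-\gamma$ while inserting $t_*^{\,1-\gamma}=\tfrac{C_*\gamma}{p}\sigma^{\beta}$ and $\beta+\gamma=p$ reproduces exactly the admissibility relation \eqref{eq.rr1}. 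Hence b) holds.

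For the converse b)$\Rightarrow$a) the task is to upgrade a general minimizer of \eqref{eq.tdm1a} — a priori only an optimizer of mass $\sigma$ with unconstrained gradient energy — to the distinguished one with $\|\nabla u\|^2=t_*$. Since $F_p\ge 0$ vanishes at $u$, the point $u$ is an unconstrained global minimizer of $F_p$, so its Euler--Lagrange equation carries no Lagrange multiplier and reads $-\Delta u+\tfrac{\beta t}{\gamma\sigma}u=\tfrac{p}{C_*\gamma\sigma^{\beta}t^{\gamma-1}}I(|u|^p)|u|^{p-2}u$ with $t=\|\nabla u\|^2$; this coincides with \eqref{Ph1} for the admissible frequency exactly when the right-hand coefficient equals $1$ and $\beta t/(\gamma\sigma)=\omega$. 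Feeding \eqref{eq.rr1} into $\beta t/(\gamma\sigma)=\omega$ and solving for $t^{1-\gamma}$ returns $t^{1-\gamma}=\tfrac{C_*\gamma}{p}\sigma^{\beta}$, i.e.\ $t=t_*$ and simultaneously coefficient $=1$; equivalently this is condition \eqref{eq.pp1} of Lemma \ref{l.fm1}, whose equivalence with condition iii) certifies that $u$ solves \eqref{Ph1}. With $\|\nabla u\|^2=t_*$ the chain $E_p(u)=\Phi_\sigma(t_*)=\mathcal E_\sigma$ shows that $u$ minimizes \eqref{eq.tdm1}, and \eqref{eq.poh2m} together with \eqref{om1A} recovers \eqref{om1}, giving a).

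The main obstacle I expect is precisely the direction b)$\Rightarrow$a) and the pinning of the gradient energy. Because $\mathcal F_\sigma=0$ is independent of $\sigma$, the minimizers of \eqref{eq.tdm1a} form a full scaling family at fixed mass, only one member of which minimizes \eqref{eq.tdm1}; the entire burden of selecting that member falls on the admissibility relation \eqref{eq.rr1}, which must be recognized as exactly the compatibility condition forcing the multiplier-free Euler--Lagrange equation of \eqref{eq.tdm1a} to coincide with \eqref{Ph1}. This is where Lemma \ref{l.fm1} is essential: it makes the Pohozaev identity \eqref{eq.pp1}, the mass relation \eqref{eq.pp2}, and solvability of \eqref{Ph1} interchangeable, so that certifying any one of them — here delivered by \eqref{eq.rr1} — suffices. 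The residual, purely computational, difficulty is bookkeeping of the exponents and constants in \eqref{eq.bg1}, \eqref{om1}, \eqref{om1A} and \eqref{eq.rr1} after the substitutions $\beta+\gamma=p$ and $t_*^{\,1-\gamma}=\tfrac{C_*\gamma}{p}\sigma^{\beta}$.
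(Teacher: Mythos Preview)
Your approach is essentially the same as the paper's. Both introduce the scalar function $\Phi_\sigma(t)=\tfrac{t}{2}-\tfrac{C_*}{2p}\sigma^{\beta}t^{\gamma}$ (the paper calls it $\varphi_\sigma$), locate its unique minimum at $t_*$ (the paper's $s_*$), identify $\mathcal E_\sigma=\Phi_\sigma(t_*)$, and conclude that energy minimizers are exactly Gagliardo--Nirenberg optimizers with $\|\nabla u\|^2=t_*$. For a)$\Rightarrow$b) the paper argues by contradiction where you use the squeeze $\mathcal E_\sigma=E_p(u)\ge\Phi_\sigma(t)\ge\Phi_\sigma(t_*)=\mathcal E_\sigma$; these are the same argument. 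The only structural difference is in b)$\Rightarrow$a): the paper introduces the rescaling $v_\mu(x)=\mu^{3/2}v(\mu x)$ with $\|\nabla v_\mu\|^2=t_*$, shows $v_\mu$ minimizes \eqref{eq.tdm1}, and then compares the Pohozaev relations for $v$ and $v_\mu$ under the scaling $\|\nabla v_\mu\|^2=\mu^2\|\nabla v\|^2$, $D(|v_\mu|^p,|v_\mu|^p)=\mu^{2\gamma}D(|v|^p,|v|^p)$ to force $\mu=1$; you instead write the unconstrained Euler--Lagrange equation of $F_p$ and try to read off $t=t_*$ directly. Both routes ultimately hinge on the Pohozaev input of Lemma~\ref{l.fm1} to pin down the gradient energy, exactly as you diagnose in your final paragraph; the paper invokes it in the same way (``we have Pohozaev normalization conditions \eqref{eq.poh23}, as stated in Lemma~\ref{l.fm1}'').
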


\subsection{Properties of  $\mathcal{F}_\sigma$, $\mathcal{E}_\sigma$ and the link among them}

We deal first with the Proof of Lemma \ref{l.11}. Namely we have the following

 \begin{proof}[Proof of Lemma \ref{l.11}]
 It is easy to see, by calculating the first variation of the functional \eqref{eq.tdm1}, that  any non-negative minimizer $Q=Q_\sigma\in H^1_{rad}$ of \eqref{eq.3} satisfies the Euler--Lagrange equation
\begin{align}\label{eq.SNb1m}
&-\Delta Q +\omega Q=I(Q^p)Q^{p-1} ,
  \end{align}
where $\omega = \omega(\sigma)$ is the Lagrange multiplier. In addition we have  also the classical Pohozaev relations
\begin{equation}\label{eq.IPoh}
   \|\nabla Q\|^2_{L^2} + \omega\| Q\|^2_{L^2}- D(|Q|^p,|Q|^p) = 0,
\end{equation}
\begin{equation}\label{eq.PI6}
    \left. \frac{d}{d R} \left( E_p \left(R^{3/2}Q(R x) \right) \right) \right|_{R = 1}  =0
\end{equation}
and
\begin{equation}\label{eq.PI6a}
 E_p(Q) =\mathcal{ E}_\sigma.
  \end{equation}

Combining the relations \eqref{eq.IPoh} and \eqref{eq.PI6}, and taking into account that $\|Q\|_{L^2}^2 = \sigma$, we can represent these relations as the following system
\begin{alignat}{2}\label{eq.PI7} \nonumber
    & \|\nabla Q\|^2_{L^2} + \omega\sigma- D(|Q|^p,|Q|^p) = 0, \\ \nonumber
    & \|\nabla Q\|^2_{L^2} - \frac{3p-5}{2p} D(|Q|^p,|Q|^p) = 0,
    \\
    & \frac{1}{2}\|\nabla Q\|^2_{L^2} - \frac{1}{2p} D(|Q|^p,|Q|^p)  = \mathcal{E}_\sigma.
\end{alignat}

By solving these identities and using the notations \eqref{eq.bg1}, we achieve
\begin{eqnarray}\label{eq.PI11}
   \nonumber D(|Q|^p,|Q|^p) = \frac{\sigma\omega}{\beta}p, \ \\
   \nonumber \|\nabla Q\|^2_{L^2} = \frac{\sigma\omega}{\beta}\gamma, \\
   \mathcal{E}_\sigma=\frac{\sigma\omega}{2\beta}(\gamma-1).
\end{eqnarray}

It is clear now that $\omega>0.$
Then, rearranging the last identity in \eqref{eq.PI11} above, we arrive at \eqref{om1}. Furthermore, the equality \eqref{om1A} is a straightforward consequence of the first two identities in \eqref{eq.PI11}. The proof of the Lemma is now complete.
\end{proof}

\section{Proof of  Theorem \ref{Th2}  }

Our goal is to show the local uniqueness of the minimizer $Q,$
associated to   the minimization problem
$$ \mathcal{E}_\sigma = \inf_{u \in H^1, \|u\|_{L^2}^2=\sigma} E_p(u),$$
where $E_p$  is defined in   \eqref{eq.3}.
The first step is to reduce the local uniqueness to the directional local uniqueness. To be more precise, any vector  $u$ on the sphere $\|u\|^2_{L^2} = \sigma$ close to $Q$ can be represented as $u = \sqrt{\sigma} (Q+\varepsilon h)/\|Q+\varepsilon h\|_{L^2}$
with $h \perp Q $ and $\|h\|_{L^2} = 1$. Without loss of generality we can assume
\begin{equation}\label{eq.ii1}
    Q(x) + \varepsilon h(x) > 0 ,
\end{equation}
provided $ \varepsilon \in I,$ where $I$ is a small interval of type $[0,a]$ with sufficiently small $a>0.$

The minimizer $Q$ will be called  locally unique in direction $h$, if
we can find $\varepsilon_0=\varepsilon_0(h)>0$ and an integer $M>1,$ so that
\begin{equation}\label{eq.u1}
   E_p\left(\sqrt{\sigma} \frac{Q+ \varepsilon h}{\|Q+ \varepsilon h\|_{L^2}}\right) - E_p(Q) \gtrsim \varepsilon^M,
\end{equation}
for any  $ \varepsilon \in (0,\varepsilon_0].$ We shall establish the directional local uniqueness in a way that $\varepsilon_0(h)>0$
will be a continuous function when $h$ is restricted to 2-dimensional subspace.
We argue by contradiction. If the minimizer $Q$ is not unique,
then we can find  sequences $ \varepsilon_k \searrow 0,$ $h_k \perp Q$ so that
$$Q_k = \sqrt{\sigma}(Q+\varepsilon_k h_k)/\|Q+\varepsilon_k h_k\|_{L^2}$$  is a solution to
$$ (\omega-\Delta)Q_k = I(|Q_k|^p) |Q_k|^{p-2}Q_k.$$
Rewriting this equation as
$$ Q_k = (\omega-\Delta)^{-1}I(|Q_k|^p) |Q_k|^{p-2}Q_k $$
and taking the limit $\varepsilon_k \searrow 0,$ we obtain
$$ h_k -\Phi(h_k) \to 0$$
in $L^2$ with
$$\Phi (h) = (\omega-\Delta)^{-1}\left((p-1)I(Q^p)Q^{p-2}h + p I(Q^{p-1}h) Q^{p-1}\right) $$ being a compact operator in $H^1.$ Then $h_k$ is convergent on $L^2$ to $h$ and  satisfies $L_+(h) =0.$ Therefore, it remains to show the directional local uniqueness for $h$ in the kernel of $L_+.$ Note that this kernel has dimension at most 2 due to Lemma \ref{l.22}. If for $h \in \Ker L_+$ the property
\eqref{eq.u1} is not true, then we can find decreasing sequence $\varepsilon_k \to 0,$ such that
\begin{equation}\label{eq.u1a}
    0 \leq E_p\left(\sqrt{\sigma} \frac{Q+ \varepsilon_k h}{\|Q+ \varepsilon_k h\|_{L^2}}\right) - E_p(Q) \lesssim \varepsilon_k^M ,
\end{equation}
for any $M>1$. However, for any smooth function $ F,$ such that there exists a sequence $\varepsilon_k\to 0,$ with the property
$|F(\varepsilon_k)-F(0)| \lesssim |\varepsilon_k|^2,$ one can assert that $F^\prime(0)=0.$ In a similar way, if there exists an integer $M>1$ and a sequence $\varepsilon_k\to 0,$ such that
$|F(\varepsilon_k)-F(0)| \lesssim |\varepsilon_k|^M,$ then all derivatives of $F$ up to order $M-1$ are identically zero.
Therefore, \eqref{eq.u1a}  implies that all derivatives of the function
$$ K : \varepsilon \to E_p\left(\sqrt{\sigma} \frac{Q+ \varepsilon h}{\|Q+ \varepsilon h\|_{L^2}}\right) $$ at $\varepsilon =0$ are identically zero.
We have the relation
$$ E_p\left(\sqrt{\sigma} \frac{Q+ \varepsilon h}{\|Q+ \varepsilon h\|_{L^2}}\right)  = $$ $$ = \frac{\sigma (\|\nabla Q\|^2 + \varepsilon^2 \|\nabla h\|^2)}{2(\sigma + \varepsilon^2)} - \frac{\sigma^p D(|Q+\varepsilon h|^p,|Q+\varepsilon h|^p)}{2p( \sigma+\varepsilon^2)^p}.$$
This function can be extended as analytic function
$$ K : z \to E_p\left(\sqrt{\sigma} \frac{Q+ z h}{\sqrt{\|Q\|^2_{L^2}+ z^2 }}\right), \ \ z\in \C,$$
in a small neighborhood, say  $ |z| < 4\delta$ with $\delta >0$ sufficiently small to be chosen later on.
We obviously have the analyticity of
$$ z \to \frac{\sigma (\|\nabla Q\|^2 + z^2 \|\nabla h\|^2)}{2(\sigma + z^2)}$$ near $z=0.$ More delicate is the analyticity of the map
$$ z \to D((Q+z h)^p,(Q+z h)^p).$$
In this case, we can apply Proposition \ref{pr.s1} and use the estimate $$|h(r)|/Q(r) \leq C.$$ Then $ \Re (1 +  z h(r)/Q(r)) > 1/2$ for $|z|$ small and the function
$$  z \to  \left(1 + z \frac{h(r)}{Q(r)}\right)^p  $$
is analytic near the origin, say
$\{ |z| < 4\delta  \}.$ Therefore, $$ z \to \int_{\R^3}\int_{\R^3} \left(1 + z \frac{h(|x|)}{Q(|x|)}\right)^p
\left(1 + z \frac{h(|y|)}{Q(|y|)}\right)^p \frac{Q(x)^p Q(y)^p dx dy}{|x-y|}$$ is analytic in the same disk. Moreover, setting
 $$ w=w(z) =  1 + z \frac{h(r)}{Q(r)},$$
   we have on the line $\{ \mathrm{Re} z = \mathrm{Im} z \}$ the property
$$ \mathrm{Re}  \ w(z)  = 1 + \mathrm{Re }z \frac{h(r)}{Q(r)}  = 1 + \mathrm{Im } z \frac{h(r)}{Q(r)} = 1+ \mathrm{Im}\   w(z).$$
Since the  principal value of $\mathrm{Log} \ w$ can be defined on the line $\mathrm{Re}  w(z)= 1+ \mathrm{Im}  w(z)$ as well as on  its  small neighborhood
$$ \Lambda_{\delta}=\{ |\mathrm{Re} z  - \mathrm{Im} z | < \delta, \mathrm{Re} z >0, \mathrm{Im} z > 0 \}. $$
Indeed,
we have
$$ \mathrm{Re} w -  \mathrm{Im }w = 1 + (\mathrm{Re} z - \mathrm{Im} z) \frac{h(r)}{Q(r)} \geq 1 - \delta \frac{|h(r)|}{Q(r)} \geq 1-\delta C.$$
In conclusion we have  analytic extension of
$$ z \to D((Q+z h)^p,(Q+z h)^p)$$
in the domain
$$\Omega_{\delta} = \{ |z| \leq 4 \delta \} \cup \Lambda_{\delta}.$$

The assumption \eqref{eq.u1a} means that all derivatives of $K(z)$ at $z=0$ are identically zero, so the function $K(z)$ is a constant
\begin{equation} \label{an1}
    K(z) = K(0).
\end{equation}

Our next step is to show that $K(z)$ can be extended as analytic function in
$ \Omega_\delta .$
Indeed, we can  show the analyticity of $\Arg (\sigma + z^2)$ on $\Omega_\delta.$ For $|z| < 4\delta$ and $\delta < \sqrt{\sigma}/8$ one has $\Re (\sigma+z^2)>3\sigma/4.$ For $|z|> 4\delta$ and $z \in \Lambda_\delta$ it is easy to see that $\Re z>2\delta$, then we have $$\Im (\sigma+z^2) = 2 (\Re z)(\Im z)= 2(\Re z)^2 + 2\Re z(\Im z - \Re z) >  $$ $$ > 2(\Re z)^2 - 2\Re z \delta = 2\Re z ( \Re z - \delta)  > 4  \delta ( 2 \delta - \delta)= 4 \delta^2 .$$
This shows that we can extend $K(z)$ as analytic function in the domain $\Omega_\delta,$ so we can extend  the relation \eqref{an1} in the whole  $\Omega_\delta.$

\begin{figure}[h!]
 \centering
       \includegraphics[totalheight=6.5cm]{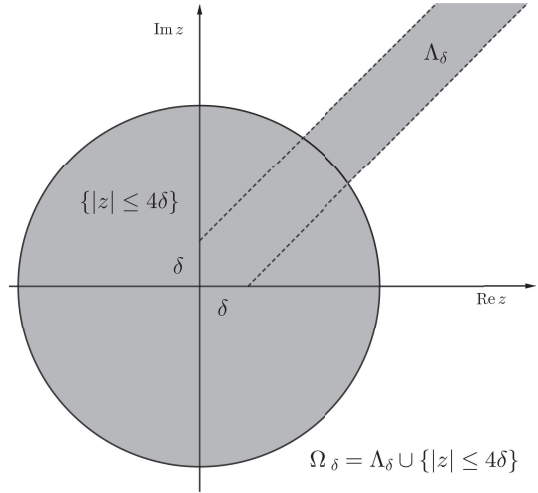}
 \caption{The domain of analyticity of $K(z)$, here depicted by the shaded region $ \Omega_\delta$.}
 \label{fig:Pic1}
\end{figure}

Choosing $ z(R) = R + i R  $ with $R \to \infty,$ we can use the relation
$$ \frac{1+z(R)h(|x|)/Q(|x|)}{\sqrt{\sigma+z(R)^2}} \to \frac{h(|x|)}{Q(|x|)},$$
combined with Lebesgue dominated convergence theorem to conclude that
$$ \lim_{R \to \infty} K(z(R)) = E_p(\sqrt{\sigma} \  h).$$
The relation
$$ E_p(\sqrt{\sigma} \ h) = K(0) = E_p(Q)$$
shows that $u(|x|) = \sqrt{\sigma} \ h$ is a minimizer of $E_p,$ satisfying the constraint condition $\|u\|^2_{L^2}=\sigma$. Hence the same is true for $|u(|x|)|$ and both of them  satisfy the equation
$$ - \Delta u +\omega u = I(|u|^p) |u|^{p-2} u.$$
Since $h$ is orthogonal to $Q,$ there exists $r_0>0,$ such that $h(r_0)=u(r_0)=0.$ Therefore, we are in position to apply Lemma \ref{sl1} and to conclude that $u(r)=0$
for any $r>0.$ This is an obvious contradiction and shows that for any $ h \in \Ker L_+$,
we can find $\varepsilon_0=\varepsilon_0(h)>0$, $\delta_0=\delta_0(h)>0$ and  an integer $M>1,$ so that \eqref{eq.u1} is fulfilled
for any $ \varepsilon \in (0,\varepsilon_0].$

Recalling that Lemma \ref{l.22}
guarantees that the kernel of $L_+$ has dimension at most 2. Thus, we can show that there exists uniform $\varepsilon_0 >0,$ such that for any $h$ in the kernel of $L_+$ the property
\eqref{eq.u1} is fulfilled
for $\varepsilon \in (0,\varepsilon_0].$

The last assertion can be verified by assuming the opposite and finding a sequence $h_k \to h^* \perp Q$, $ \|h^*\|_{L^2}=1$ such that the
function
$$ z \to K^*(z) =  E_p\left(\sqrt{\sigma} \frac{Q+ z h^*}{\|Q+ z h^*\|_{L^2}}\right) $$
has all derivatives equal to zero at the origin. As above, the analytic extension of $K^*(z)$ in $\Omega_\delta$ shows that $h^* = 0$ and this contradiction completes the proof.

\section{Characterization of Gagliardo--Nirenberg optimal constant}

We start this section by the simple observation that for any $\sigma >0,$
the  minimization problem
$$
   \mathcal{F}_{\sigma} = \inf_{u \in  H^1, \|u\|^2_{L^2}=\sigma} F_p(u).
$$
 has infimum $\mathcal{F}_{\sigma}=0.$

 \begin{proof}[Proof of Lemma \ref{l.fm1}] The Pohozaev conditions for the minimizers of \eqref{eq.tdm1a} have the form
 \begin{equation}\label{eq.le1}
  \frac{\|\nabla u\|^2}{\gamma} = \frac{\omega \sigma}{\beta} = \frac{D(|u|^p,|u|^p)}{p} .
 \end{equation}
 The assumption that  $u$ is a minimizer of \eqref{eq.tdm1a} has the meaning that the Gagliardo--Nirenberg equality
 $$ D(|u|^p,|u|^p) = C_*\|\nabla u\|^{2\gamma} \sigma^\beta$$
 holds.
Moreover, for any  $ \sigma >0$, the Euler--Lagrange equation for minimizers of $\mathcal{F}_{\sigma}$ is
 $$ -\Delta u + \Lambda u =  I(|u|^p) |u|^{p-2}u.$$
First we note that {\bf iii)} is equivalent to  \eqref{eq.le1} and therefore $\Lambda=\omega$. Moreover, Gagliardo--Nirenberg equality combined with  \eqref{eq.le1} give \eqref{eq.rr1}, so $(\sigma,\omega)$ is admissible pair for \eqref{eq.tdm1a} and we have {\bf iii)} $ \Longrightarrow$ {\bf i)} and {\bf ii)}.
From Gagliardo--Nirenberg equality, \eqref{eq.rr1} and {\bf i)} imply
 $$ D(|u|^p,|u|^p) = C_* \|\nabla u \|^{2\gamma} \sigma^\beta = \frac{p \|\nabla u \|^2}{\gamma},$$
 then
 $$  \|\nabla u \|^{2(1-\gamma)} = \frac{C_* \gamma}{p}  \sigma^\beta.$$
 Now \eqref{eq.rr1} can be rewritten as
 $$  \frac{C_* \gamma}{p}  \sigma^\beta = \left(\frac{\gamma \omega \sigma}{\beta} \right)^{1-\gamma}$$
 and we arrive at
  \eqref{eq.le1} so we conclude that {\bf i)} $ \Longrightarrow$ {\bf iii)}. In a similar way we check
  {\bf ii)} $ \Longrightarrow$ {\bf iii)}.
This completes the proof.
\end{proof}

Our next step is to connect the minimizers of $\mathcal{F}_\sigma$ with the  minimization problem
\begin{equation}\label{eq.Mp12}
    \mathcal{E}_\sigma = \inf_{u \in H^1 ,\ \|u\|_{L^2}^2 = \sigma} E_p(u).
\end{equation}

\begin{proof}[Proof of Theorem \ref{l.eq1}]
  {\bf a)}$ \Longrightarrow$  {\bf b)}:   If $(\sigma, \omega)$ is admissible pair for  \eqref{eq.tdm1}, then we have \eqref{eq.rr1}.

The plan is to assume that $u$ is a minimizer of \eqref{eq.tdm1} and to prove
\begin{equation}\label{eq.GNe1}
    D(|u|^p,|u|^p)  = C_* \|u\|_{L^2}^{5-p}\|\nabla u\|_{L^2}^{3p-5}.
\end{equation}
For the purpose we shall assume that
\begin{equation}\label{eq.ii1}
  D(|u|^p,|u|^p)  < C_* \|u\|_{L^2}^{5-p}\|\nabla u\|_{L^2}^{3p-5}.
\end{equation}
and we shall arrive at contradiction.
From \eqref{eq.ii1} we have the inequality
$$ E_p(u) > \frac{1}{2} \|\nabla u\|^2_{L^2} - \frac{C_* \sigma^\beta}{2p} \ \|\nabla u\|_{L^2}^{2\gamma}, $$
with $\beta, \gamma$ defined in \eqref{eq.bg1}.
The right hand side suggests us to consider the function
\begin{equation}\label{eq.vp1}
  \varphi(s) = \varphi_\sigma(s) = \frac{s}{2}  - \frac{C_* \sigma^\beta}{2p} \ s^{\gamma}, \ \ s \geq 0
\end{equation}
and obviously we have then
\begin{equation}\label{eq.re1}
   E_p(u) > \varphi \left( \|\nabla u\|^2_{L^2}\right) \geq \min_{s \geq 0} \varphi(s) = \varphi(s_*),
\end{equation}
with $s_*$ being the unique solution to the equation
$$ s_* = \frac{\gamma}{p} C_* \sigma^\beta s_*^\gamma.$$
Further we take any minimizer $v$ of \eqref{eq.tdm1a} and then we know that $\|v\|_{L^2}^2 =\sigma$ and
\begin{equation}\label{eq.GNn1}
    D(|v|^p,|v|^p)  = C_*\sigma^{\beta}\|\nabla v\|_{L^2}^{2\gamma}.
\end{equation}
Moreover, any rescaled function
$$ v_\mu(x) = \mu^{3/2} v(\mu x) $$
generated by $v$ preserves the $L^2$ norm and the Gagliardo--Nirenberg equality \eqref{eq.GNn1}. Now we choose $\mu$ in such a way so that
$$ \|\nabla v_\mu\|_{L^2}^2 = \mu^2 \|\nabla v\|_{L^2}^2 = s_*.$$
Then we have
$$ \varphi(s_*) = \varphi \left( \|\nabla v_\mu\|^2_{L^2}\right) = E_p(v_\mu)$$
and we arrive at
$$ \mathcal{E}_\sigma = E_p(u) > E_p(v_\mu),$$
with $\|v_\mu\|_{L^2}^2 =\sigma$ and this is clearly in contradiction with the fact that $u$ is a minimizer of \eqref{eq.tdm1}.

{\bf b)}$ \Longrightarrow$  {\bf a)}: We assume that $v$ satisfies Gagliardo--Nirenberg equality \eqref{eq.GNe1}, $\|v\|_{L^2}^2=\sigma$ and we have Pohozaev normalization conditions
\begin{equation}\label{eq.poh23}
   \frac{\omega \sigma}{\beta} = \frac{\|\nabla v\|^2}{\gamma}= \frac{D(|v|^p,|v|^p)}{p} ,
\end{equation}
as stated in Lemma \ref{l.fm1}. We shall use the properties of the function $\varphi_\sigma(s)$ defined in \eqref{eq.vp1}. As before, we choose $s_*$ to be  the point of minimum of this function.
Next,  we choose the parameter $\mu>0$ so that
$v_\mu(x) = \mu^{3/2} v(\mu x)$ satisfies
$$ \| \nabla v_\mu \|_{L^2}^2 = s_*.$$ Then $v_\mu$ satisfies the Gagliardo--Nirenberg equality and hence
\begin{equation}\label{tl1}
  E_p(v_\mu) = \varphi_\sigma(s_*).
\end{equation}
It is not difficult to show that
\begin{equation}\label{tl2}
  \varphi_\sigma(s_*) = \mathcal{E}_\sigma .
\end{equation}
Indeed, the identity \eqref{tl1} implies
$$ \varphi_\sigma(s_*) =  E_p(v_\mu) \geq \mathcal{E}_\sigma.$$
 If we take any minimizer $u$ of \eqref{eq.tdm1} we know from the step {\bf a)}$ \Longrightarrow$  {\bf b)}, that $u$ satisfies the Gagliardo--Nirenberg equality
$$  \mathcal{E}_\sigma =E_p(u) = \varphi_\sigma ( \|\nabla u\|_{L^2}^2 ) \geq \varphi_\sigma(s_*).$$
Therefore, we arrive at \eqref{tl2} and the identities
$$  E_p(v_\mu) = \varphi_\sigma(s_*) = \mathcal{E}_\sigma$$
guarantee that $v_\mu$ is a minimizer of  \eqref{eq.tdm1}, so we can find its Lagrange multiplier $\omega(\mu)$ such that
$$  \frac{\omega(\mu) \sigma}{\beta} = \frac{\|\nabla v_\mu\|^2}{\gamma}= \frac{D(|v_\mu|^p,|v_\mu|^p)}{p}.$$

On the other hand $v$ satisfies \eqref{eq.poh23}
and the simple rescaling relations
$  \|\nabla v_\mu\|^2 = \mu^2 \|\nabla v\|^2,$ $ D(|v_\mu|^p,|v_\mu|^p) = \mu^{2\gamma} D(|v|^p,|v|^p)$ show immediately that $\mu=1$ and $\omega(\mu)=\omega.$

\end{proof}

\begin{cor} Assume $p \in (5/3, 7/3).$
If $(\sigma, \omega)$ is admissible pair for  \eqref{eq.tdm1}, then we have the following relation between $\mathcal{E}_\sigma < 0$ and the best Gagliardo--Nirenberg constant $C_*$
\begin{equation}\label{eq.gne1}
   C_* = \frac{p}{\gamma^\gamma} \left(\frac{2}{1-\gamma}\right) ^{1-\gamma} \ \frac{|\mathcal{E}_\sigma|^{1-\gamma}}{\sigma^{p-\gamma}}.
\end{equation}
\end{cor}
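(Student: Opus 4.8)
The plan is to reduce the statement to an algebraic identity expressing the best constant $C_*$ through the Pohozaev data already collected in Lemma~\ref{l.11}. The starting point is Theorem~\ref{l.eq1}: since $(\sigma,\omega)$ is an admissible pair for \eqref{eq.tdm1} and $u$ is the associated minimizer, the same pair is admissible for \eqref{eq.tdm1a} and $u$ minimizes $\mathcal{F}_\sigma$ as well. In particular the Gagliardo--Nirenberg inequality \eqref{eq.GNq1} is saturated, so that, recalling $5-p=2\beta$ and $3p-5=2\gamma$ from \eqref{eq.bg1},
$$ D(|u|^p,|u|^p) = C_* \|u\|_{L^2}^{5-p}\|\nabla u\|_{L^2}^{3p-5} = C_* \, \sigma^{\beta} \, \|\nabla u\|_{L^2}^{2\gamma}. $$
Everything that follows is bookkeeping with the normalization relations \eqref{eq.PI11}.

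Next I would write the Pohozaev normalization in the compact form $D(|u|^p,|u|^p)=p\,k_{\mathcal E}$ and $\|\nabla u\|_{L^2}^2=\gamma\,k_{\mathcal E}$ with $k_{\mathcal E}=\omega\sigma/\beta$, as in \eqref{eq.poh2m}. Substituting these two equalities into the saturated Gagliardo--Nirenberg identity gives
$$ p\,k_{\mathcal E} = C_* \, \sigma^{\beta} \, (\gamma\,k_{\mathcal E})^{\gamma}, $$
and, after isolating $C_*$,
$$ C_* = \frac{p}{\gamma^{\gamma}\,\sigma^{\beta}}\, k_{\mathcal E}^{\,1-\gamma}. $$

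It then remains to convert $k_{\mathcal E}$ into energy data. By \eqref{om1A} we have $k_{\mathcal E}=2\mathcal{E}_\sigma/(\gamma-1)$. Here the one genuinely delicate point is the sign: for $p\in(5/3,7/3)$ one checks from \eqref{eq.bg1} that $\gamma\in(0,1)$, hence $\gamma-1<0$, while $k_{\mathcal E}=\omega\sigma/\beta>0$; this forces $\mathcal{E}_\sigma<0$ and yields $k_{\mathcal E}=2|\mathcal{E}_\sigma|/(1-\gamma)$. Plugging this into the expression for $C_*$ and using the identity $\beta=p-\gamma$ from \eqref{eq.bg1} to rewrite $\sigma^{\beta}=\sigma^{p-\gamma}$ produces exactly \eqref{eq.gne1}.

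The main obstacle is not analytic but lies in the careful tracking of the exponents and of the sign of $\gamma-1$; once Theorem~\ref{l.eq1} has furnished the saturation of the Gagliardo--Nirenberg inequality, the corollary is a direct algebraic consequence of the normalization conditions \eqref{eq.PI11} together with \eqref{om1A}.
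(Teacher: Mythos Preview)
Your proof is correct; the paper itself does not supply a separate proof of this corollary, treating it as an immediate consequence of Theorem~\ref{l.eq1} together with the Pohozaev relations of Lemma~\ref{l.11}, and your derivation is precisely the natural way to make this explicit. An equally short alternative, which the paper's setup also makes available, is to equate the two admissibility conditions \eqref{om1} and \eqref{eq.rr1} directly (eliminating $\omega$), but this leads to the same algebra.
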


\section{Asymptotics at infinity}

The vector $h \perp Q$ in the kernel of $L_+$ satisfies the equations (here for simplicity we take  $\omega=1$)
\begin{eqnarray} \label{eq.As1}
 & & (1 - \Delta)h = pB Q^{p-1} + (p-1) A Q^{p-2}h, \\ \nonumber
  & &-\Delta B = Q^{p-1}h.
\end{eqnarray}
Note that the positive radial  ground state  $Q$ satisfies the system
\begin{eqnarray} \label{eq.As2}
 & & (1 - \Delta)Q = A Q^{p-1} , \\ \nonumber
  & &-\Delta A = Q^{p}.
\end{eqnarray}
By using the arguments in \cite{St77}, we have the following asymptotic expansions of $Q $ and $ A$ as $r \to \infty$
\begin{equation} \label{eq.As3}
    Q(r) = \frac{e^{-r}}{r} \left( c_0 +  O \left(\frac{1}{r} \right) \right),
\end{equation}
\begin{equation} \label{eq.As4}
    A(r) = \frac{1}{r} \left( d_0 +  O \left(\frac{1}{r} \right) \right),
\end{equation}
with $c_0 , d_0 >0.$
\begin{prop} \label{pr.s1}
If $h$ is a nontrivial solution to \eqref{eq.As1}, then there exists $r_* >0$, so that $h(r) \neq 0$ for $r > r_*$ and the following estimate holds
\begin{equation} \label{eq.As5i}
    |h(r)| \lesssim  \frac{e^{-r}}{r} , \ \
\end{equation}
\end{prop}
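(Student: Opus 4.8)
The plan is to extract the decay of $h$ from the integral representation attached to the Yukawa operator $1-\Delta$. Since $h\in H^1_{rad}$ vanishes at infinity, the first equation in \eqref{eq.As1} may be inverted against the fundamental solution $G(z)=e^{-|z|}/(4\pi|z|)$ of $1-\Delta$ on $\R^3$, giving
$$ h(x)=\int_{\R^3} G(x-y)\Bigl(p\,B(y)Q(y)^{p-1}+(p-1)A(y)Q(y)^{p-2}h(y)\Bigr)\,dy. $$
Note that $\phi(r)=e^{-r}/r$ is itself annihilated by $1-\Delta$ away from the origin, and that convolution of $G$ against any source $S$ with $S(y)\lesssim e^{-\alpha|y|}$, $\alpha>1$, reproduces precisely the rate $e^{-r}/r$. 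Thus the whole matter reduces to showing that the two source terms on the right decay strictly faster than $e^{-r}$, after which \eqref{eq.As5i} follows by feeding this information back into the representation.

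First I would control the nonlocal term $B$. A crude preliminary bound (Strauss' radial estimate together with a first comparison against $e^{-(1-\epsilon)r}$) makes $Q^{p-1}h$ integrable and exponentially small, so the Newtonian representation of the second line of \eqref{eq.As1} yields
$$ B(r)=\frac{1}{4\pi r}\int_{\R^3}Q(y)^{p-1}h(y)\,dy+o\!\left(\frac1r\right),\qquad r\to\infty. $$
Together with \eqref{eq.As3} this gives $p\,BQ^{p-1}\lesssim e^{-(p-1)r}/r^{p}$; for $p>2$ this already decays strictly faster than $e^{-r}$. The remaining term is handled by \eqref{eq.As4}: its coefficient $(p-1)AQ^{p-2}\to 0$, so $(p-1)AQ^{p-2}h$ is a negligible perturbation of $\phi$. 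Running a bootstrap in the weighted norm $\sup_{r\ge r_0}\phi(r)^{-1}|h(r)|$ through the integral equation then closes and delivers the upper bound $|h(r)|\lesssim e^{-r}/r$.

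The non-vanishing of $h$ for large $r$ follows from the same asymptotics. Writing $g=rh$, the radial form of the equation is $g''=(1-V(r))g+f(r)$ with $V=(p-1)AQ^{p-2}\to0$ and $f$ exponentially small, so asymptotic integration (Levinson's theorem) shows that a nontrivial decaying solution obeys $g(r)\sim c\,e^{-r}$ (or a faster, but still sign-definite, rate dictated by the dominant source term). Since $h\not\equiv0$, the leading coefficient is nonzero and $g$, hence $h$, keeps a fixed sign beyond some $r_*$; this gives $h(r)\neq0$ for $r>r_*$.

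The genuine obstacle, and the reason a direct ODE/Sturm comparison is unavailable, is the self-consistent dependence of $B$ on the unknown $h$: the decay estimate is really a fixed-point statement, and its closure is most delicate at the endpoint $p=2$, where $BQ^{p-1}$ would a priori decay only at the critical rate $e^{-r}$. Precisely here the constraint $h\perp Q$ saves the argument, for then
$$ \frac{1}{4\pi}\int_{\R^3}Q(y)^{p-1}h(y)\,dy=\frac{1}{4\pi}\int_{\R^3}Q(y)h(y)\,dy=0, $$
so the leading $1/r$ part of $B$ vanishes, $B=o(1/r)$, and $p\,BQ^{p-1}$ again decays faster than $e^{-r}$, keeping the bound $e^{-r}/r$ free of the spurious logarithmic factor that the naive expansion would otherwise produce.
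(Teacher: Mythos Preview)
Your route to the decay bound \eqref{eq.As5i} via the Yukawa representation and a weighted bootstrap is different from the paper's (which simply quotes the asymptotic expansion $h(r)=e^{-r}r^{-1}(c_1+O(1/r))$ in analogy with \eqref{eq.As3}), but it is a reasonable alternative and, with the bookkeeping filled in, would give the same upper bound.

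The genuine gap is in the non-vanishing claim. You write that ``since $h\not\equiv 0$, the leading coefficient is nonzero'', invoking Levinson's theorem. But this is exactly the nontrivial content of the proposition, and Levinson by itself does not deliver it: Levinson gives you the \emph{form} of the asymptotics of the decaying homogeneous solution, not that the coefficient $c_1$ in $h(r)\sim c_1 e^{-r}/r$ is nonzero. Your equation $g''=(1-V)g+f$ is also not an honest inhomogeneous linear ODE, because the ``source'' $f=prBQ^{p-1}$ depends on $B$, which is determined by $h$ through $-\Delta B=Q^{p-1}h$; the system is coupled, and your parenthetical ``or a faster, but still sign-definite, rate dictated by the dominant source term'' has no force once both leading coefficients are allowed to vanish.

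The paper spends essentially the entire proof on precisely this point. It argues by contradiction: assuming $c_1=0$, a maximum-principle argument on $(1-\Delta-V)h=BQ^{p-1}$ forces the leading coefficient $d_1$ of $B$ to vanish as well; then, substituting $h=e^{-r}g(r)/r$ and using $c_1=d_1=0$, one obtains integral inequalities for $|g|+|B|$ to which a Gronwall-type lemma (Lemma~\ref{l.g1}) applies, yielding $h\equiv 0$. This is the mechanism that converts ``$h$ nontrivial'' into ``$c_1\neq 0$'', and it is missing from your proposal.
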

\begin{proof}
The starting point are the following asymptotics (verified in a similar way to \eqref{eq.As3} and \eqref{eq.As4})
\begin{equation} \label{eq.As5}
    h(r) = \frac{e^{-r}}{r} \left( c_1 +  O \left(\frac{1}{r} \right) \right),
\end{equation}
$$ c_1 = \int_{\R^3}  pB(|y|) Q^{p-1}(|y|) + (p-1) A(|y|) Q^{p-2}(|y|)h(|y|) dy, $$
\begin{equation} \label{eq.As6}
    B(r) = \frac{1}{r} \left( d_1 +  O \left(\frac{1}{r} \right) \right),
\end{equation}
\begin{equation} \label{eq.As7}
    B^\prime(r) = -\frac{1}{r^2} \left( d_1 +  O \left(\frac{1}{r} \right) \right),
\end{equation}
$$ d_1 = \int_{\R^3}  Q^{p-1}(|y|)h(|y|) dy. $$
If $c_1\neq 0,$ then the assertion of the Proposition follows. If $c_1=0,$ then we can show that $d_1=0.$
Indeed, if $d_1 \neq 0,$ then without loss of generality we can assume $d_1>0,$ so we can find a sufficiently large $r_0,$ so that  $B(r) >0$ and $ V(r) = A(r)Q^{p-2}(r) < 1,$ for  $r >r_0.$ Let us assume that
$h(r)=0$ has two roots $r_2 > r_1 > r_0$ and
$\min_{[r_1,r_2]} h(r) = h(\tilde{r}) <0.$ Then the maximum principle for the equation
\begin{equation}\label{eq.As8}
(1-\Delta-V(r))h(r) = B(r) Q^{p-1}(r)
\end{equation}
in the interval $[r_1,r_2]$ leads to a contradiction. Indeed, in the point $\tilde{r}$ of the negative minimum of $h$ we have
$\Delta h(\tilde{r}) \geq 0 $, then
$$  (1-\Delta-V(\tilde{r}))h(\tilde{r}) \leq 0.$$
This obviously contradicts the positiveness of the right hand side in \eqref{eq.As8}.
The contradiction shows that $c_1=d_1=0.$
Then we can perform the substitution
$ h(r) = e^{-r}g(r)/r$ into \eqref{eq.As1} and
deduce the equations
\begin{eqnarray} \label{eq.As9}
 & & -g^{\prime\prime}(r) + 2 g^\prime(r)= F_1, \\ \nonumber
  & &- B^{\prime\prime}(r) - \frac{2}{r} B^\prime(r) = F_2,
\end{eqnarray}
with
$$ F_1(r)= pr e^{r} B(r) Q^{p-1}(r) + (p-1) A(r) Q^{p-2}(r) g(r), $$
$$ F_2(r) = Q^{p-1}(r) \frac{e^{-r}g(r)}{r}.$$
The asymptotic expansions \eqref{eq.As3}, \eqref{eq.As4} as well the ones in
\eqref{eq.As6} and \eqref{eq.As7} with $c_1=d_1=0$ imply the estimates
$$ |F_1(r)| \lesssim \frac{e^{-(p-2)r}}{r^{p-2} }|B(r)|+ \frac{e^{-(p-2)r}}{r^{p-1} }|g(r)|,$$
$$ |F_2(r)| \lesssim \frac{e^{-pr}}{r^{p} }|g(r)|.$$
Integrating the equations \eqref{eq.As9} from $r$ to $\infty,$ we find
$$ g(r) \lesssim \int_r^\infty \frac{e^{-(p-2)s}}{s^{p-3} }|B(s)|+ \frac{e^{-(p-2)s}}{s^{p-2} }|g(s)| ds,$$
$$ B(r) \lesssim \int_r^\infty \frac{e^{-ps}}{s^{p-1} }|g(s)| ds .$$
To this end we can use the following Lemma with $\psi(r) = |g(r)|+ |B(r)|.$

\begin{lem} (see Lemma 4.1 in \cite{GTV}) \label{l.g1}
If $\varepsilon > 0,$ $\psi(r) \in C(1,\infty)$ is a non negative function satisfying
\begin{equation}\label{eq.A1}
    \psi(r) \leq C, \ \ \forall r > 1
\end{equation}
and
$$ \psi(r) \leq C \int_r^\infty \frac{\psi(s) ds}{s^{1+\varepsilon}}, \ \ \forall r> 1,$$
then $\psi(r) =0$ for $r >1.$
\end{lem}
An application of this Lemma guarantees that $h(r)=0$ and this contradiction completes the proof.
\end{proof}

\section{Simple ODE lemmas}

In case $u(|x|)$ is a radial $C^1$-solution of the equation
\begin{equation} \label{ode1}
(\omega - \Delta) u = V(|u|) u,
\end{equation}
with $V(|u|)(|x|)$
being a continuous function in $|x|>0$, we have the following result.
\begin{lem} \label{sl1}
If $u$ and $|u|$ solve \eqref{ode1}, $u \in C^1(0,\infty)$ and there exists $r_0>0,$ such that $u(r_0)=0,$ then $u(r) \equiv 0.$
\end{lem}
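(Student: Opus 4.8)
The plan is to reduce the partial differential equation to a second-order linear ordinary differential equation in the radial variable and then to exploit the regularity that the hypothesis ``$|u|$ solves \eqref{ode1}'' forces on $|u|$ at the vanishing point $r_0$. The main point is that, although \eqref{ode1} is nonlinear, once the particular solution $u$ is frozen the coefficient becomes a prescribed continuous function of $r$, so that both $u$ and $|u|$ become solutions of one and the same \emph{linear} equation.

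First I would write \eqref{ode1} in radial form. Since $u(|x|)$ is radial, $\Delta u = u'' + \tfrac{2}{r}u'$, and the equation reads
$$ u''(r) + \frac{2}{r}\,u'(r) = c(r)\,u(r), \qquad c(r) := \omega - V(|u|)(r). $$
By the continuity assumption on $V(|u|)$, the coefficient $c(r)$ is a fixed continuous function on $(0,\infty)$, so $u$ genuinely solves a linear second-order ODE with continuous coefficients there. The identical computation shows that $|u|$ solves the \emph{same} linear equation, because replacing $u$ by $|u|$ leaves $V(|u|)$, and hence $c(r)$, unchanged.

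Next I would argue that $u'(r_0)=0$. Suppose not, say $u'(r_0)>0$. Then, $u$ being $C^1$, it is strictly increasing through $r_0$, negative just to the left and positive just to the right, so $|u|(r)=-u(r)$ for $r<r_0$ and $|u|(r)=u(r)$ for $r>r_0$ near $r_0$; hence $|u|'(r_0^-)=-u'(r_0)$ while $|u|'(r_0^+)=u'(r_0)$, and $|u|$ has a genuine corner at $r_0$. On the other hand, the hypothesis that $|u|$ solves \eqref{ode1} gives $\Delta|u| = (\omega - V(|u|))|u|$, whose right-hand side is continuous; but a corner produces a singular surface-measure contribution to the distributional Laplacian $\Delta|u|$ on the sphere $\{|x|=r_0\}$, with weight equal to the jump $2|u'(r_0)|>0$ of the normal derivative. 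This contradicts the continuity of the right-hand side, and therefore $u'(r_0)=0$. (If \eqref{ode1} is read classically, one argues instead that a classical solution $|u|$ of $\Delta|u| = (\text{continuous})$ must be $C^2$, hence $C^1$, across $r_0$, which is again incompatible with the corner unless $u'(r_0)=0$.)

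Finally, with $u(r_0)=u'(r_0)=0$ at a point $r_0>0$ where the coefficients of the linear ODE are continuous, I would invoke uniqueness for the initial value problem: written as a first-order system, the equation has a right-hand side that is continuous in $r$ for $r>0$ and Lipschitz in $(u,u')$, so the zero solution is the only one matching these data. This forces $u\equiv 0$ on a neighborhood of $r_0$, and then, by propagating the vanishing of $(u,u')$, on all of $(0,\infty)$. The main obstacle is the middle step, namely converting the purely qualitative statement ``$|u|$ solves the equation'' into the quantitative conclusion $u'(r_0)=0$; everything else is standard radial reduction and ODE uniqueness.
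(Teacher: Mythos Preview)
Your argument is correct and follows essentially the same route as the paper: reduce to the radial ODE, observe that $u'(r_0)\neq 0$ would force a corner in $|u|$ incompatible with $|u|$ solving the equation, and then invoke ODE uniqueness from the data $u(r_0)=u'(r_0)=0$. The paper's proof is a two-line version of exactly this; your write-up simply supplies the details (distributional/$C^2$ interpretation of the corner contradiction and the linear IVP uniqueness) that the paper leaves implicit.
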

\begin{proof}
If $u^\prime(r_0) =0,$ then the Cauchy problem for the ODE \eqref{ode1} implies the assertion. If $u^\prime(r_0) < 0,$ then $|u(r)|$ is not differentiable in $r_0.$
The proof is now completed.
\end{proof}

Next we discuss the dimension of the kernel of $L_+$.

\begin{lem} \label{l.22} If  $2 < p < 7/3,$ then we have
$$ \mathrm{dim } (\mathrm{Ker} L_+) \leq 2. $$
\end{lem}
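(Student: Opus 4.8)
The plan is to convert the nonlocal eigenvalue equation $L_+h=0$ into a \emph{local} system of ordinary differential equations and then to bound the number of its solutions that are admissible at the origin. Restricting $L_+$ to radial functions, the condition $h\in\mathrm{Ker}\,L_+$ on $H^1_{rad}$ is, after writing $A=I(Q^p)$ and $B=I(Q^{p-1}h)$ and normalising $\omega=1$ as in \eqref{eq.As1}, exactly the system \eqref{eq.As1}, namely $(1-\Delta)h=pBQ^{p-1}+(p-1)AQ^{p-2}h$ together with $-\Delta B=Q^{p-1}h$. The crucial gain is that, although $L_+$ contains the nonlocal terms $pI(Q^{p-1}\cdot)Q^{p-1}$ and $(p-1)I(Q^p)Q^{p-2}$, the pair $(h,B)$ solves a genuinely local, fourth-order radial ODE system whose coefficients are built from the smooth positive function $Q$ and from $A=I(Q^p)$, which is smooth at the origin and, by \eqref{eq.As4}, behaves like $d_0/r$ at infinity. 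Moreover $B=I(Q^{p-1}h)$, being the Newtonian potential of an integrable radial density, is automatically regular at $r=0$, so the kernel element $h$ lifts to a solution $(h,B)$ of \eqref{eq.As1} that is regular at the origin in \emph{both} components.

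Next I would record the behaviour of solutions of \eqref{eq.As1} at the regular singular point $r=0$. The radial Laplacian contributes the term $\tfrac{2}{r}\partial_r$, and since $Q(0)>0$ the potentials $Q^{p-1}$ and $V:=AQ^{p-2}$ are continuous near the origin; here the hypothesis $p>2$ ensures that $s\mapsto |s|^{p-2}s$ is $C^1$, so that $L_+$ and these coefficients are well defined and the linearisation leading to \eqref{eq.As1} is justified. Writing the system as $r^2Y''+2rY'+R(r)Y=0$ with $Y=(h,B)^\top$ and $R(r)=O(r^2)$, the indicial equation is $(s^2+s)^2=0$, so the indicial exponents are $s=0$ and $s=-1$, each double. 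Consequently every solution behaves near the origin like a combination of $r^{0}$- and $r^{-1}$-type solutions, possibly with logarithmic corrections. The two solutions attached to $s=-1$ carry a genuine $r^{-1}$ singularity in at least one component; since $\nabla(1/r)\sim 1/r^2$ fails to be square integrable in $\R^3$, those solutions do not belong to $H^1$ near the origin. Hence the solutions of \eqref{eq.As1} lying in $H^1$ in a neighbourhood of the origin form a space of dimension at most $2$, spanned by the solutions attached to the double exponent $s=0$.

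To conclude, the map $h\mapsto (h,I(Q^{p-1}h))$ is linear and injective, and it sends every $h\in\mathrm{Ker}\,L_+\cap H^1_{rad}$ into this (at most) two-dimensional space of origin-regular solutions of \eqref{eq.As1}. Therefore $\dim(\mathrm{Ker}\,L_+)\le 2$, as claimed. Observe that for the upper bound only the local analysis at $r=0$ is used; the decay estimate of Proposition \ref{pr.s1} is not needed here, and would only serve to constrain the kernel further.

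I expect the main obstacle to be precisely the nonlocality of $L_+$: the usual Sturm/Wronskian count that bounds the number of decaying radial solutions of a scalar second-order ODE does not apply to the nonlocal term $pI(Q^{p-1}h)Q^{p-1}$, which is exactly the difficulty flagged in the introduction. The device that circumvents it is the reformulation \eqref{eq.As1}, which restores locality at the cost of doubling the order. The remaining delicate point is the indicial analysis at the \emph{degenerate} regular singular point $r=0$: because the exponents are repeated and the indicial matrix vanishes identically at $s=-1$, one must verify that no genuine $s=-1$ solution can become $H^1$ through cancellation of its $r^{-1}$ term (its leading coefficient vector is nonzero, hence some component really is $\sim 1/r$), and that the two $s=0$ solutions, even if they carry $\log r$ factors, indeed lie in $H^1_{\mathrm{loc}}(\R^3)$. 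Both are routine Fuchsian-ODE verifications once the local system \eqref{eq.As1} is in place.
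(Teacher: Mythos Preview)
Your proposal is correct and follows essentially the same route as the paper: reformulate the nonlocal equation $L_+h=0$ as the local second-order system \eqref{eq.As1} for the pair $(h,B)$ with $B=I(Q^{p-1}h)$, and then count the solutions that are regular at the origin. The only difference is presentational: the paper invokes the Fuchs--Painlev\'e theorem (Theorem~\ref{fpt1}) to assert directly that the analytic solutions of the system near $r=0$ are parametrised by the two values $(w(0),B(0))$, whereas you carry out the equivalent Frobenius indicial analysis (exponents $s=0,-1$, each of multiplicity two) and discard the $s=-1$ branch by the $H^1$ constraint. Both arguments yield the same two-parameter family of admissible solutions and hence the bound $\dim(\mathrm{Ker}\,L_+)\le 2$.
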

\begin{proof}
Any positive radial solution $w$ to the equation $L_+ w=0$ is a solution of the ordinary differential equation
$$ -r^{-2} \partial_r ( r^{2} \partial_r w(r)) + \omega w = pI( Q^{p-1}w)Q^{p-1}+(p-1)I(  Q^{p})Q^{p-2}w.$$
Then the couple of $w$ and $ B=I(Q^{p-1}w)$ satisfies the
 system of nonlinear second order differential equations
\begin{alignat}{2}\label{fp.10}
    & w^{\prime\prime}(r) + \frac{2}{r}  w^\prime(r) = \omega w(r) - p B Q^{p-1}-(p-1)I(  Q^{p})Q^{p-2}w , \\ \nonumber
    & B^{\prime\prime}(r) + \frac{2}{r} B^\prime(r)  = -Q^{p-1}w.
\end{alignat}
subject to  initial data
\begin{alignat}{2}\label{fp.12}
    & w(0) = w_0 \neq  0, \ \ B(0) = B_0 \neq 0, \\ \nonumber
    & w^\prime(0) =  0, \ \ B^\prime(0) =  0.
\end{alignat}
The Fuchs--Painleve Theorem
\ref{fpt1} gives the series expansions
\begin{equation}\label{eq.fp.16}
  w(r) = w_0 + \sum_{k=1}^\infty w_{2k} r^{2k}, \ \ B(r) = B_0 + \sum_{k=1}^\infty B_{2k} r^{2k},
\end{equation}
where all coefficients $w_{2k}, B_{2k}, k \geq 1$ can be determined in a unique way by the recurrence relations in terms of the two free initial data
$w_0$ and $B_0$. This completes the proof of the Lemma.
\end{proof}

\section{Appendix: Fuchs--Painleve series expansions of ground states }

The equation
\begin{equation}\label{t:20}
   -\Delta u + E u = I(u^p)u^{p-1}
\end{equation}
 can be rewritten as a system of nonlinear second order differential equations
\begin{alignat}{2}\label{fp.1}
    & Q^{\prime\prime}(r) + \frac{2}{r}  Q^\prime(r) = EQ - A(r) Q^{p-1}, \\ \nonumber
    & A^{\prime\prime}(r) + \frac{2}{r}  A^\prime(r)  = -Q^{p}.
\end{alignat}
Our goal will be to  verify that imposing special initial data
\begin{alignat}{2}\label{fp.2}
    & Q(0) = Q_0 > 0, \ Q^\prime(0) =  0, \  \\ \nonumber
    &  A(0) = A_{0}, \ \ A^{\prime}(0) =  0,
\end{alignat}
we can find unique real analytic (near $r=0$) solution to this Cauchy problem. Then we can consider the following more general problem
\begin{alignat}{2}\label{fp.3}
    & Y^{\prime\prime}(r) + \frac{c}{r}  Y^\prime(r) = F(r,Y), \\ \nonumber
    & Y(0) = Y^\prime(0) = 0,
\end{alignat}
where we have shifted the initial data to zero, but we assume that $F(r,0) \neq 0$ may be nontrivial source term.
To be more precise, here $Y(t) \in C^2([0,1); \R^3 )$ is a vector - valued function, while $F$ satisfies the assumptions
\begin{equation}\label{fp.4}
  \mbox{$F(r,Y)$ is real analytic near $r=0, Y=0$}
\end{equation}
and
\begin{equation}\label{fp.5}
    F(0,0) \neq 0.
\end{equation}
As in Theorem 11.1.1 in \cite{H76} we can state the following Fuchs--Painleve type result

\begin{thms} \label{fpt1}
If the conditions \eqref{fp.4} and \eqref{fp.5} are fulfilled, then the Cauchy problem \eqref{fp.3} has a unique real analytic solution
$$ Y(r) = \sum_{k=2}^\infty Y_k r^k $$ near $r=0.$
\end{thms}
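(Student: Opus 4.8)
The plan is to establish the theorem by the classical two-step strategy for Fuchsian (regular-singular) Cauchy problems: first produce a formal power series solution whose coefficients are uniquely forced by a recurrence, and then prove convergence by the method of majorants. Throughout I treat $Y$ componentwise, so $F$ may be regarded as a real-analytic map into $\R^3$ and the estimates below are understood coordinate by coordinate (or in the Euclidean norm $|\cdot|$ on $\R^3$).

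\textbf{Step 1: the forced recurrence.} Consistently with the initial data $Y(0)=Y'(0)=0$, I would insert the ansatz $Y(r)=\sum_{k\ge 2}Y_k r^k$ into \eqref{fp.3}. A term-by-term computation gives $Y''(r)+\frac{c}{r}Y'(r)=\sum_{j\ge 0}(j+2)(j+1+c)\,Y_{j+2}\,r^{j}$. On the other hand, since $F$ is real analytic near $(0,0)$, writing $F(r,Y)=\sum_{m,n}a_{mn}r^{m}Y^{n}$ and using $Y(r)=O(r^2)$ one sees that $F(r,Y(r))=\sum_{j\ge 0}F_j r^{j}$, where each coefficient $F_j$ is a polynomial in the $a_{mn}$ and in $Y_2,\dots,Y_j$ \emph{only} (the lowest power of $Y(r)$ is $r^2$, so no coefficient of index larger than $j$ can enter $F_j$). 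Matching powers of $r^{j}$ yields the recurrence $(j+2)(j+1+c)\,Y_{j+2}=F_j(Y_2,\dots,Y_j)$. In particular $2(1+c)\,Y_2=F_0=F(0,0)\neq 0$ by \eqref{fp.5}, so $Y_2\neq 0$; and since in our setting $c\ge 0$ (indeed $c=2$ for the radial Laplacian), the prefactor $(j+2)(j+1+c)$ never vanishes, so every $Y_{j+2}$ is determined uniquely from the previous coefficients. This simultaneously gives formal existence and the uniqueness statement, because any analytic solution must carry precisely these Taylor coefficients.

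\textbf{Step 2: convergence via majorants.} To upgrade the formal series to a genuine analytic solution I would bound $F$ by a majorant. Analyticity of $F$ near the origin provides $M,\rho>0$ with $|a_{mn}|\le M\rho^{-(m+n)}$, so $F$ is majorized by $\Phi(r,Y)=\dfrac{M}{(1-r/\rho)(1-Y/\rho)}$. Using the elementary bound $(j+2)(j+1+c)\ge 2(1+c)>0$ (valid for all $j\ge 0$ when $c>-1$), I define comparison coefficients by $\bar Y_{j+2}=\frac{1}{2(1+c)}\,[r^{j}]\,\Phi\!\left(r,\bar Y(r)\right)$, that is, $\bar Y(r)$ solves the scalar algebraic equation $\bar Y=\dfrac{r^{2}}{2(1+c)}\,\Phi(r,\bar Y)$. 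An induction on $j$ then yields $|Y_k|\le\bar Y_k$ for all $k$, since the positive coefficients of $\Phi$ dominate those of $F$ and the prefactor only helps. Finally, setting $G(r,Y)=Y-\frac{r^2}{2(1+c)}\Phi(r,Y)$, one has $G(0,0)=0$ and $\partial_Y G(0,0)=1\neq 0$, so the analytic implicit function theorem furnishes a solution $\bar Y(r)$ analytic near $r=0$ with $\bar Y(0)=0$. Hence $\bar Y$ has positive radius of convergence, and the comparison $|Y_k|\le\bar Y_k$ forces the series $\sum_{k\ge2}Y_k r^k$ to converge on a neighborhood of the origin to a real-analytic function, which by construction solves \eqref{fp.3}.

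\textbf{Main obstacle.} The only genuinely delicate point is the convergence in Step 2, where one must organize the majorant so as to absorb the singular coefficient $c/r$. Here the singularity is in fact benign: it contributes the growing factor $(j+1+c)$ to the denominator of the recurrence, which strengthens rather than weakens the estimates, so no loss of radius of convergence occurs. The sole structural requirement is that $c$ avoid the resonant values $\{-1,-2,\dots\}$ at which some prefactor $(j+2)(j+1+c)$ would vanish; this is automatic in the application, where $c=2$.
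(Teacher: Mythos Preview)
Your argument is correct and follows the classical Fuchs--Painlev\'e strategy: a forced recurrence giving formal existence and uniqueness, followed by a Cauchy majorant to secure convergence. The computation of the indicial factor $(j+2)(j+1+c)$ is right, and your observation that the singular term $c/r$ only \emph{helps} the estimates (by enlarging the denominator in the recurrence) is the essential structural point. The one place to be slightly more careful is the vector-valued bookkeeping: writing $F(r,Y)=\sum a_{mn}r^mY^n$ with $Y\in\R^3$ is informal, and the majorant $\Phi(r,Y)=M/((1-r/\rho)(1-Y/\rho))$ should really be $\Phi(r,|Y|)$ or a componentwise dominant; this is routine to fix and does not affect the argument.

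As for comparison with the paper: the paper does not give a proof of this theorem at all. It is stated in the appendix as a known Fuchs--Painlev\'e type result, with the phrase ``As in Theorem~11.1.1 in \cite{H76}'' pointing to Hille's book on ordinary differential equations in the complex domain. Your write-up therefore supplies what the paper simply quotes from the literature, and your approach is precisely the standard one found in such references. One minor point worth flagging: the paper's statement of the theorem imposes hypotheses only on $F$ (analyticity and $F(0,0)\neq0$) and is silent about $c$, whereas your proof correctly identifies the non-resonance condition $c\notin\{-1,-2,\dots\}$ needed for the recurrence to be solvable; in the paper's application one has $c=2$, so the omission is harmless there.
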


This result applied to the Cauchy problem \eqref{fp.1}, \eqref{fp.2} gives the following series expansions near $r=0$
\begin{equation}\label{eq.fp.6}
  Q(r) = Q_0 + \sum_{k=1}^\infty Q_{2k} r^{2k}, \ \ A(r) = A_0 + \sum_{k=1}^\infty A_{2k} r^{2k}.
\end{equation}


\begin{thebibliography}{9}






\bibitem{CGN07} S.M. Chang, S. Gustafson, K. Nakanishi and T. P. Tsai, Spectra of linearized operators for NLS solitary waves, \textit{SIAM Journal on Mathematical Analysis}, 39(4)  (2007), 1070--1111.



\bibitem{GV} H. Genev and G. Venkov,  Soliton and blow-up solutions to the time-dependent Schr\"odinger-Hartree equation, \textit{ Discrete Contin. Dyn. Syst. Ser. S}, {5}  no. 5 (2012), 903 -- 923.


\bibitem{GS} V. Georgiev and A. Stefanov, {On the classification of the spectrally stable standing waves of the Hartree problem, } Physica D: Nonlinear Phenomena,
Vol. 370, (2018), 29--39.


\bibitem{GTV} V. Georgiev, M. Tarulli and G. Venkov, Existence and Uniqueness of Ground States for $p$-Choquard Model, \textit{Nonlinear Analysis} 179 (2019), 131--145.



\bibitem{H76} E. Hille,  { Ordinary differential equations in the complex domain},
reprint of the 1976 original.
Dover Publications, Inc., Mineola, NY,  1997



\bibitem{K89} M.K.~Kwong, Uniqueness of positive solutions of $\Delta u-u+u^p=0$, \textit{  Arch. Rational Mech. Anal.} Vol.~105, no. 3 (1989), 243 -- 266.



\bibitem{Lieb} E.H. Lieb,  { Existence and uniqueness of the minimizing solution of Choquard's nonlinear equation,} \textit{  Stud. Appl.
Math.} {57} { (2)}  (1976/1977),  93--105.





\bibitem{MS99} K. McLeod and J. Serrin,  Uniqueness of positive radial solutions of $\Delta u+f(u)= 0$ in $\R^N,$ \textit{Archive for Rational Mechanics and Analysis}, 99(2) (1987), 115--145.


\bibitem{MVS1} V. Moroz and  J. Van Schaftingen, { Groundstates of nonlinear Choquard equations: existence, qualitative properties and decay asymptotics}, \textit{ J. Funct. Anal.} {265}, no. 2  (2013), 153--184.

\bibitem{MVS2} V. Moroz and  J. Van Schaftingen, { Existence of ground states for a class of nonlinear Choquard equations.} \textit{Trans. Amer. Math. Soc.} { 367}, no. 9 (2015), 6557--6579.

\bibitem{St77} {W.~Strauss. Existence of Solitary Waves in Higher Dimension, \textit{Comm. Math. Physics}, Vol.~55 (1977),  149 -- 162.}


\bibitem{TV} M. Tarulli and G. Venkov, {Decay and Scattering in energy space for the solution of weakly coupled Choquard and  Hartree-Fock equations}, e-print (2019): arXiv:1904.10364.





\end{thebibliography}
\end{document}